\documentclass[a4paper,11pt,oneside,openright,reqno]{amsart}
\usepackage[english]{babel}
\usepackage[hidelinks]{hyperref}
\usepackage{amsthm}
\usepackage{color}
\usepackage{mathrsfs}
\usepackage{amsmath}
\usepackage{mathtools}
\usepackage{amsfonts}
\usepackage{amssymb}
\usepackage{bm}
\usepackage{physics}
\usepackage{enumitem}
\usepackage{cleveref}
\usepackage{esint}
\usepackage{nicefrac}
\usepackage{yfonts}
\usepackage{dirtytalk}

\numberwithin{equation}{section}

\newtheorem{thm}{Theorem}
\newtheorem*{thm*}{Theorem}
\newtheorem*{qest*}{Question}

\newtheorem{lem}[thm]{Lemma}
\newtheorem{prop}[thm]{Proposition}

\theoremstyle{definition}

\theoremstyle{remark}

\newcommand{\defeq}{\mathrel{\mathop:}=}

\def\XXint#1#2#3{{\setbox0=\hbox{$#1{#2#3}{\int}$}
		\vcenter{\hbox{$#2#3$}}\kern-.5\wd0}}

\newcommand{\RR}{\mathbb{R}}
\newcommand{\CC}{\mathbb{C}}
\newcommand{\NN}{\mathbb{N}}
\newcommand{\ZZ}{\mathbb{Z}}

\newcommand{\Ric}{{\mathrm{Ric}}}
\let\epsilon\varepsilon

\author[C.\ Brena]{Camillo Brena}
\address{School of Mathematics, Institute for Advanced Study, 1 Einstein Dr., Princeton NJ 08540, USA}
\email{cbrena@ias.edu}

\title[]{Instability of the fundamental group \\for non-collapsed Ricci-limits}

\begin{document}\maketitle
\begin{abstract}
We construct two sequences of closed $4$-dimensional manifolds with non-negative Ricci curvature,  diameter bounded from above by $1$, and volume bounded from below by $v>0$, with different fundamental groups but with the same Gromov--Hausdorff limit. This provides a negative answer to the question posed in [J.\ Pan. Ricci Curvature and Fundamental Groups of Effective Regular Sets. \textit{Journal of Mathematical Study}, 58(1):3--21, 2025]. 
%
%
\end{abstract}
	\section*{Introduction}
Starting from the works of Cheeger--Colding \cite{Cheeger-Colding97I,Cheeger-Colding97II,Cheeger-Colding97III}, significant effort has been devoted to studying spaces that arise as Gromov--Hausdorff limits $M_i\stackrel{GH}{\rightarrow} X$, where $M_i$ are $n$-dimensional Riemannian manifolds satisfying 
\begin{equation}\label{assumptions}
    \Ric_{M_i}\ge -(n-1),\quad\mathrm{diam}(M_i)\le D,\quad\mathrm{vol}(M_i)\ge v>0.
\end{equation}
Among the various natural questions arising from this theory, there is the one about the stability of fundamental groups. Notice  first that by \cite{peralex}, \cite{GPW}, if one replaces the  Ricci curvature bound $\Ric_{M_i}\ge -(n-1)$ in \eqref{assumptions} with the sectional curvature bound $\mathrm{sec}_{M_i}\ge -1$, then $M_i$ and $X$ are homeomorphic for large $i$, in particular, they have isomorphic fundamental groups.
However, this is not true in the case that we are considering, i.e., lower bounds on the Ricci curvature. 
For example, the stability of the fundamental groups fails in the example constructed  in \cite{Otsu91}, which consists of a sequence of metrics $(g_i)$ on $S^3\times \RR\mathbb{P}^2$ satisfying \eqref{assumptions} and such that $(S^3\times\RR\mathbb{P}^2,g_i)$ converges to the \emph{singular} space given by the spherical suspension of $S^2\times \RR\mathbb{P}^2$, which is simply connected. On the other hand, there are some positive results in this direction. For instance, for a sequence $M_i\stackrel{GH}{\rightarrow} X$ as in \eqref{assumptions}, there exists, for $i$ large, a surjective group homomorphism $\pi_1(M_i)\rightarrow\pi_1(X)$, see \cite{MR4493619}. Moreover, convergence of  first Betti numbers holds, by the results recalled in \cite{MR4595312}, building upon \cite{zamora2022bettinumbercollapse,MR1837249,MR4810225}.

In this note, we  consider the following  two questions,  posed by J.\ Pan  in \cite{pan24}.
\begin{qest*}
Given $n,D,v>0$, is there a positive constant $\epsilon=\epsilon(n,D,v)>0$ such that     if two closed $n$-dimensional   manifolds $M_1$ and $M_2$ satisfy \eqref{assumptions} and $d_{GH}(M_1,M_2)<\epsilon$, then the fundamental groups of $M_1$ and $M_2$ are isomorphic?
\end{qest*}
\begin{qest*}
For a convergent sequence of closed $n$-dimensional manifolds $M_i\stackrel{GH}{\rightarrow} X$ satisfying \eqref{assumptions}, is it possible to determine the fundamental group of $M_i$, for $i$ large enough, solely from $X$?
\end{qest*}
As noted in \cite{pan24}, the two questions above clearly have the same answer.
 The aim of this note is to justify a negative answer to these questions with the following example.
 
    \begin{thm*}
		There exist two sequences of closed $4$-dimensional  manifolds, $(M_i)$ and $(N_i)$, with
		\begin{equation*}
			d_{GH}(M_i,N_i)\rightarrow 0,\qquad\text{as $i\rightarrow\infty$},
		\end{equation*} 
		and such that, for a positive constant $v>0$, for every $i$, the  following hold.
		\begin{itemize}
			\item $M_i$ and $N_i$ have non-negative Ricci curvature.
			\item The diameters of $M_i$ and $N_i$  are bounded from above by $1$.
			\item The volumes of $M_i$ and $N_i$ are bounded from below by $v$.
			\item $M_i$ has fundamental group $\ZZ/2\ZZ$, whereas $N_i$ is simply connected.
		\end{itemize}
	\end{thm*}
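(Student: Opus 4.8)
The plan is to produce both sequences as two different smoothings of one and the same singular limit. I take this limit to be the round spherical suspension
\[
X=S^4/(\ZZ/4\ZZ)=\Sigma\big(L(4,1)\big),
\]
where $\ZZ/4\ZZ\subset U(1)\subset SU(2)$ acts on the $S^3$ cross-sections of $S^4=\Sigma(S^3)$ and fixes the two suspension points. Then $X$ is a compact $4$-orbifold with constant curvature $1$ away from two singular points, each with tangent cone the flat cone $\RR^4/(\ZZ/4\ZZ)=C(L(4,1))$ over the lens space $L(4,1)=S^3/(\ZZ/4\ZZ)$; in particular $\Ric_X\equiv 3$ on the regular part. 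The underlying metric space of $X$ is simply connected, since the generator of the fundamental group of a cross-section is killed in each conical cap. Both $M_i$ and $N_i$ will coincide with a fixed rescaling of $X$ outside two balls of radius $\epsilon_i\to 0$ about the singular points and will differ only by the choice of smoothing inside those balls; as the smoothings are supported on vanishingly small balls, both sequences converge to $X$ in the Gromov--Hausdorff sense, and hence $d_{GH}(M_i,N_i)\to 0$ for free.

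The two smoothings are dictated by two Ricci-flat ALE four-manifolds asymptotic to the common cone $\RR^4/(\ZZ/4\ZZ)$. For $N_i$ I glue into each singular point a small copy of the simply connected $A_3$ Kronheimer instanton, the Ricci-flat ALE resolution of $\CC^2/(\ZZ/4\ZZ)$, whose link is $L(4,1)$. For $M_i$ I instead glue in a small copy of $W\defeq T^\ast\RR\mathbb{P}^2$, realised as the quotient of the Eguchi--Hanson space $\mathrm{EH}=T^\ast S^2$ by the free isometric involution $\iota$ induced by the antipodal map of the zero section; since $\mathrm{EH}$ is asymptotic to $\RR^4/(\ZZ/2\ZZ)$ and the involution lifts at infinity to an order-four rotation of $S^3$, the quotient $W$ is a Ricci-flat ALE manifold asymptotic to $\RR^4/(\ZZ/4\ZZ)$ with link $UT\,\RR\mathbb{P}^2=L(4,1)$ and $\pi_1(W)=\ZZ/2\ZZ$. (At the second singular point one uses the orientation-reversed caps, since the two poles of $X$ carry the conjugate lens orientations.) The fundamental groups now follow from van Kampen's theorem: both caps simply connected give $\pi_1(N_i)=1\ast_{\ZZ/4\ZZ}1=1$, while two copies of $W$ give
\[
\pi_1(M_i)=(\ZZ/2\ZZ)\ast_{\ZZ/4\ZZ}(\ZZ/2\ZZ)=\ZZ/2\ZZ,
\]
the amalgamation being along the surjection $\ZZ/4\ZZ=\pi_1(L(4,1))\twoheadrightarrow\ZZ/2\ZZ=\pi_1(W)$.

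The crux --- and the step I expect to be by far the hardest --- is to perform these gluings while preserving $\Ric\ge 0$. Two routes seem viable. In the first, one rescales each ALE cap by $\epsilon_i$ and interpolates, on a small fixed annulus around each singular point, between the rescaled ALE metric (which approximates the flat cone to high order, its curvature decaying like the sixth power of the distance) and the round metric of $X$; one must then check that the Hessian-type error produced by the interpolation stays below the ambient positive Ricci curvature, which forces a careful choice of the regime $\epsilon_i\ll\delta_i\to 0$. In the second, more hands-on route, one exploits that all the manifolds in sight are $SU(2)$-invariant with principal orbit $L(4,1)$: both $M_i$ and $N_i$ then carry an explicit cohomogeneity-one (doubly-warped Berger) metric $dr^2+A(r)^2\eta^2+B(r)^2(\omega_1^2+\omega_2^2)$, the two topologies corresponding to the two admissible bolts ($S^2$ for $N_i$, $\RR\mathbb{P}^2$ for $M_i$), and $\Ric\ge 0$ reduces to a set of elementary but delicate inequalities on $A,B$ and their first two derivatives. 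Starting from the round profile of $X$ --- which satisfies these inequalities strictly --- and modifying it only near the bolts, where one lets the bolt size tend to $0$, should yield metrics with $\Ric\ge 0$ that Gromov--Hausdorff converge to $X$. After a fixed rescaling making the diameters at most $1$, the volumes converge to $\mathrm{vol}(X)>0$, so the sequences are non-collapsed and a uniform lower bound $v>0$ holds. The remaining verifications --- that $\iota$ is a free isometry of the Eguchi--Hanson metric (which is invariant under $O(3)$), that $UT\,\RR\mathbb{P}^2\cong L(4,1)$, and that the two caps are asymptotic to the same cone --- are routine, so the entire difficulty is concentrated in the quantitative, curvature-preserving gluing just described.
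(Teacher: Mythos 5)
Your overall architecture coincides with the paper's: two families of caps with the same lens-space link $S^3/\mu_4=L(4,1)$ but different fundamental groups (an Eguchi--Hanson quotient for $M_i$, a resolution of $\CC^2/(\ZZ/4\ZZ)$ for $N_i$), inserted at the poles of a suspension-type limit, with the correct van Kampen bookkeeping. The gap is exactly the step you flag as the crux: neither of your two routes actually produces the $\Ric\ge 0$ gluing, and both break down for identifiable reasons. In route (a), the regime $\epsilon_i\ll\delta_i$ cannot help, because the dominant error does not come from the ALE cap: on an annulus at scale $\delta_i$ around a pole, rescaled to unit size, the round metric of $X$ differs from its tangent cone by $O(\delta_i^2)$ in $C^2$, while its Ricci lower bound in the same rescaled units is $3\delta_i^2$; a cutoff interpolation therefore creates Ricci errors $C\delta_i^2$ with $C$ a (not small) constant depending on the cutoff, which the available positivity cannot absorb, \emph{independently} of $\epsilon_i$. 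Worse, on the portion of the gluing region where the metric is close to the flat cone the ambient Ricci vanishes, so there even arbitrarily small interpolation errors cannot be absorbed: a Ricci-flat ALE space without symmetry cannot be truncated to its exact cone keeping $\Ric\ge0$ by any such argument. Route (b) is inapplicable to your chosen $N_i$-cap: the Kronheimer $A_3$ instanton is \emph{not} cohomogeneity one (its isometry group is only $U(1)\times U(1)$, with generic two-dimensional orbits; among the $A_k$'s only Eguchi--Hanson has a doubly warped Berger form), so there are no profile functions $A,B$ to work with. Moreover, your premise that the round profile satisfies the relevant inequalities \emph{strictly} fails exactly where it matters: evaluating condition (2) of Lemma \ref{vaefdsc} on $\rho=\phi=\sin r$ gives slack $3\sin^2 r$, which degenerates at the poles, i.e.\ precisely where the bolts must be inserted --- the same $O(\delta^2)$ obstruction as in route (a).

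This is the difficulty the paper's construction is designed to sidestep, and it does so by giving up the round suspension as the limit. All gluings in the paper are \emph{exact} warped-product gluings, with no error terms to absorb: concave, small-slope profiles joined via the smoothing lemma of \cite{Otsu91}, with $\Ric\ge0$ read off from Lemma \ref{Ricwarp1} and Lemma \ref{vaefdsc}. To make the caps compatible with this, the Eguchi--Hanson space is first conformally modified following \cite[Proposition 3.1]{anderson1990} (explicitly possible because it \emph{is} cohomogeneity one) so as to become exactly conical with small slope $c$; and for $N_i$ the Ricci-flat cap is abandoned altogether: instead of Kronheimer's $A_3$ space (which is asymptotic to $\CC^2/\langle\mathrm{diag}(i,-i)\rangle$ and matches the diagonal quotient only through an orientation-reversing isometry, and in any case lacks the needed symmetry), the paper constructs by hand a cohomogeneity-one $\Ric\ge0$ metric on the $O(-4)$ disc bundle over $S^2$, the minimal resolution of the diagonal $\CC^2/\mu_4$ (Proposition \ref{vefdsca}), and the ODE conditions of Lemma \ref{vaefdsc} then force the asymptotic slope $c$ to be small. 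The common limit is accordingly the suspension of $c\,S^3/\mu_4$ rather than $S^4/(\ZZ/4\ZZ)$: the smallness of $c$ is exactly what creates the uniform slack in the Ricci inequalities that your interpolation lacks. To repair your write-up, you would need either a genuine gluing theorem at the round suspension (in the spirit of \cite[Proposition 3.1]{anderson1990}, which you never invoke, applied to a cap that is not cohomogeneity one), or a restructuring around small cone angles as in the paper.
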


    \subsection*{Overview of the example}
	Consider $S^3=\{(z_1,z_2):|z_1|^2+|z_2|^2=1\}\subseteq\CC^2$, endowed with the standard metric $ds_3^2$. We let $\mu_k$ be the group of isometries of $\CC^2$ induced by the diagonal action of the $k$-th roots of unity, which clearly acts freely on $S^3$. Notice that $\mu_2$ corresponds to  the antipodal action. 
    Our strategy is to build two sequences $(M_i)$ and $(N_i)$ with different fundamental groups but the same Gromov--Hausdorff limit.
    
\begin{itemize}
        \item[(a)] First, we will construct a  smooth simply connected manifold $M'$ with non-negative Ricci curvature, asymptotic  at infinity to the cone over $S^3/\mu_2=\RR\mathbb{P}^3$. Moreover, $M'$ admits a free $\ZZ/2\ZZ$ action  and the quotient of $M'$ by this action, $M$, has fundamental group $\ZZ/2\ZZ$ and   is asymptotic  at infinity to the cone over $S^3/\mu_4$.
        
        We will obtain the space $M'$ starting from the Eguchi--Hanson space, which is the cone  over $S^3/\mu_2$ endowed with a smooth metric, which resolves the apparent singularity at the origin by identifying to a point the Hopf fibers and, outside a compact set, is the standard metric. Topologically, the Eguchi--Hanson space is the cotangent bundle of $S^2$, which is simply connected. Moreover, the Eguchi--Hanson space  admits a free $\ZZ/\ZZ_2$ action $\iota$, defined by
        \begin{equation}\notag
            [(z_1,z_2)]_{\mu_2}\stackrel{\iota}{\mapsto}[(-\bar z_2,\bar z_1)]_{\mu_2};
        \end{equation}
        notice that $\iota$ remains free even under the identification of the Hopf fibers.
        The quotient of the Eguchi--Hanson space by $\iota$ is isometric to  the cone over $(S^3/\mu_2)/\{1,\iota\}$ outside a compact set. 

        We will consider a transformation of $\CC^2=\RR^4$ (which does not retain neither the complex structure nor the Hopf fibration) which maps, for any $r$, isometrically, $(rS^3/\mu_2)/\{1,\iota\}$ to  $rS^3/\mu_4$. We  thus obtain $M'$ as the image of the Eguchi--Hanson space through this isometry.
        \item[(b)] Second, we will consider a smooth simply connected manifold $N$ with non-negative Ricci curvature, asymptotic at infinity  to the cone over $S^3/\mu_4$. 
        
        The space $N$ will be the desingularization of the  orbifold $\CC^2/\mu_4$. 
        There are various ways to desingularize  the  orbifold $\CC^2/\mu_4$ with a smooth metric of non-negative Ricci curvature. For example, \cite{Kronheimer89} exhibits such a metric on the $4$-dimensional manifold which underlies the  complex space $X$, where $X\rightarrow\CC^2/\mu_4$ is the minimal resolution of the quotient singularity. 
    \end{itemize}
    By considering $M_i$ and $N_i$ as rescaled copies of $M$ and $N$, respectively, for a blow-down sequence $R_i$, one obtains two sequences of $4$-dimensional manifolds with non-negative Ricci curvature converging to the cone over $S^3/\mu_4$, but with different fundamental groups. This settles our main result in the non-compact case. To obtain the case with uniformly bounded diameter, it is enough to change the metrics in a conformal way and perform gluing (cf.\ \cite{anderson1990}) to obtain the spherical suspension of  $S^3/\mu_4$ as common limit.

    To keep the construction of the example as explicit as possible, we exhibit directly  the smooth metric for the  resolution of the singularity in $\CC^2/\mu_4$, as a doubly warped product. The drawback with this construction is that we obtain, as asymptotic cone, the cone over $cS^3/\mu_4$, where $c$ is a small constant. As a result, the sequences $(M_i)$ and $(N_i)$ will converge to the spherical suspension of  $cS^3/\mu_4$.
    \section*{Acknowledgments}
    I want to thank Elia Bruè for suggesting the strategy to  construct the example. I am also grateful  to Francesco Malizia, Riccardo Ontani  and Alessandro Pigati for discussions about the topic of this paper.
    
        This material is based upon work supported by the National Science Foundation under Grant No.\ DMS-1926686.

	\section*{The Eguchi--Hanson space}
    We will recall below the classical  construction of the Eguchi--Hanson space, which will be needed to obtain the spaces  $(M_i)$. More precisely, we will perturb the metric and perform a gluing procedure.
	\begin{thm}\label{thm1}
		There exists a constant $c'>0$ and, for every $c\in (0,c')$, there exists a   Riemannian metric $ds_{M'}^2=ds_{M',c}^2$ on $(0,\infty)\times( S^3/\mu_2)$, such that the following hold.
		\begin{itemize}
			\item The metric extends to a smooth metric on the completion of the space, which is a smooth simply connected $4$-manifold $M'$.
			\item The metric $ds_{M'}^2$ has non-negative Ricci curvature.
			\item $M'$ has  a free involutive isometry, which, for $S^3\subseteq \CC^2$,  reads as
			\begin{equation*}
				(r,(z_1,z_2))\mapsto (r,(i {z_1},i{z_2})).
			\end{equation*} 
			\item It holds
			\begin{equation*}
				ds_{M'}^2=dr^2+c^2(r+1)^2ds_3^2\qquad\text{for }r>1.
			\end{equation*}
		\end{itemize}
	\end{thm}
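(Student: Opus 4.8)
The plan is to produce $M'$ via an $SU(2)$-invariant (Berger-sphere) warped ansatz adapted to the Hopf fibration $S^1\to S^3\to S^2$ and its descent to $S^3/\mu_2$. Let $\{\sigma_1,\sigma_2,\sigma_3\}$ be the standard left-invariant coframe on $S^3=SU(2)$, with $\sigma_3$ dual to the Hopf direction. I would seek
\begin{equation*}
ds_{M'}^2 = dr^2 + a(r)^2(\sigma_1^2+\sigma_2^2) + b(r)^2\sigma_3^2 ,
\end{equation*}
which is invariant under the left $SU(2)$-action and descends to $(0,\infty)\times S^3/\mu_2$. The completion at $r=0$ is a smooth $4$-manifold exactly when the Hopf circle collapses in the standard way ($b(0)=0$, $a(0)>0$, together with the usual parity/smoothness conditions on $a,b$); in that case the core is the zero section $S^2$, $M'$ is diffeomorphic to $T^*S^2$, and in particular is simply connected. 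The round narrow cone prescribed for $r>1$ is the unsquashed, affine case $a(r)=b(r)=\lambda(r+1)$ with $\lambda\propto c$, normalized so that $ds_{M'}^2=dr^2+c^2(r+1)^2 ds_3^2$; since $ds_3^2$ is the unit round metric, this end is the metric cone over a sphere of radius $c<1$, so it satisfies $\Ric(\partial_r,\partial_r)=0$ and has strictly positive Ricci curvature in the spherical directions, of size $\sim(1-c^2)/(c^2(r+1)^2)$.

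The construction then proceeds by a perturbation-and-gluing of the classical Eguchi--Hanson profile, exactly as the section heading suggests. On a neighborhood of the core I would take $(a,b)$ to be the exact Eguchi--Hanson warping functions, so that there $ds_{M'}^2$ is smooth, closes up as $T^*S^2$, and is Ricci-flat. Since Eguchi--Hanson is asymptotic to the flat cone, with unsquashed cross sections and corrections decaying in $r$, I would choose the transition to occur in a region where the metric is already nearly round ($b/a$ close to $1$), first perturbing $(a,b)$ to make the cross sections exactly round and then bending the common slope down from its Eguchi--Hanson value (that of the flat, full-radius cone) to $\lambda$, so that $a=b=\lambda(r+1)$ for $r>1$.

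The key step, and the main obstacle, is to check that non-negative Ricci curvature survives this gluing. In the transition the metric is close to a singly-warped round cone $dr^2+a(r)^2 ds_3^2$, for which
\begin{equation*}
\Ric(\partial_r,\partial_r)=-3\frac{a''}{a},\qquad \Ric(e_i,e_i)=-\frac{a''}{a}+2\frac{1-(a')^2}{a^2},
\end{equation*}
where $e_i$ are unit vectors tangent to the cross section; bending the slope downward forces $a''\le 0$ and decreases $(a')^2$, so both expressions gain non-negative contributions. Using the explicit Ricci components of the biaxial ansatz (with $\Ric(\partial_r,\partial_r)=-2a''/a-b''/b$ in general), keeping $\Ric\ge 0$ reduces to a short list of differential inequalities; the delicate point is to absorb the error terms coming from the residual squashing and from the perturbation to exact conicity. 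This is precisely where the hypothesis $c<c'$ enters: for $c$ small the target cone is strongly positively curved in the spherical directions, and this positive slack dominates the error terms, so the interpolation can be carried out with $\Ric\ge 0$. I would realize $(a,b)$ by an explicit piecewise prescription with controlled $C^2$-bounds, reducing the verification to elementary estimates.

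Finally, for the free involutive isometry I would transport the classical free involution of Eguchi--Hanson. Its lift to $S^3$, namely $(z_1,z_2)\mapsto(-\bar z_2,\bar z_1)$, preserves the Hopf fibration (reversing the fiber orientation) and is therefore an isometry of every biaxial metric, in particular of the one constructed above; together with $\mu_2$ it generates a cyclic group of order $4$ acting freely on $S^3$, whose generator is a rotation by $(\pi/2,\pi/2)$ and hence is conjugate in $SO(4)$ to the generator $(z_1,z_2)\mapsto(iz_1,iz_2)$ of $\mu_4$. Conjugating the whole construction by such an isometry $T$ of $\RR^4$---which preserves the round conical end, being an isometry of the round $S^3$, though it does not preserve the Hopf fibration---turns the involution into the stated map $(r,(z_1,z_2))\mapsto(r,(iz_1,iz_2))$ while leaving the metric smooth, of non-negative Ricci curvature, and equal to $dr^2+c^2(r+1)^2 ds_3^2$ for $r>1$. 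Freeness on all of $M'$, including on the core $S^2$, is inherited from the classical fact that the Eguchi--Hanson involution remains free after the Hopf fibers are collapsed. Together with the smooth cap, the Ricci estimate, and the exact conical form for $r>1$, this yields all four assertions.
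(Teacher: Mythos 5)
Your overall architecture (Eguchi--Hanson core, gluing to a narrow cone over $S^3/\mu_2$, transporting the free involution $(z_1,z_2)\mapsto(-\bar z_2,\bar z_1)$ and conjugating by an element of $O(4)$ to put it in the form $(z_1,z_2)\mapsto(iz_1,iz_2)$) is the same as the paper's, and your treatment of the involution and of the smooth closing-up at the core is correct. The genuine gap is in the central step, the control of the Ricci curvature through the transition region. You propose to \emph{first} make the cross sections exactly round while the slope is still the Eguchi--Hanson one (close to $1$), and \emph{then} bend the common slope down to $\lambda\sim c$, and you justify the error absorption by the positive Ricci slack $\sim(1-c^2)/(c^2(r+1)^2)$ of the narrow target cone. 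But that slack only exists \emph{after} the slope has been bent down; in the unsquashing region the relevant comparison models --- the Eguchi--Hanson metric and the full-slope flat cone --- are Ricci-\emph{flat}, so there is no positive slack at all to absorb the $O(r^{-4})$ perturbation terms. Worse, this is not a merely presentational issue: in the biaxial ansatz all three Ricci inequalities (radial, fiber, base directions) are \emph{saturated identically} by Eguchi--Hanson (for instance, in arc-length coordinates the base warping satisfies $a''>0$, so the radial condition $-b''/b-2a''/a\ge 0$ holds with exact equality), and unsquashing at slope $\approx 1$ leaves only second-order room in the squashing parameter. A naive interpolation $b\mapsto b+\chi(a-b)$ with $a$ kept as in Eguchi--Hanson violates the radial condition; one must co-design the deformation of \emph{both} warping functions (e.g.\ bend $a$ down slightly so that $b$ can catch up), which is a genuine argument, not the ``elementary estimates with controlled $C^2$-bounds'' you defer to. As stated, the proof of the second bullet point (non-negative Ricci curvature) is therefore incomplete, and the mechanism you invoke for it is the wrong one.

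For comparison, the paper sidesteps exactly this difficulty with Anderson's conformal trick: it replaces $ds^2$ by $h(t)^{-2}ds^2$ with $h$ convex (equal to $1+s^2$ near $t=1$), so that as the Eguchi--Hanson parameter $a\downarrow 0$ the metric near $\{u=1\}$ converges smoothly to $\tfrac{1}{(1+u^2)^2}\big(du^2+u^2\,ds_3^2\big)$, i.e.\ to a piece of the \emph{round sphere}, whose Ricci curvature $\ge 12$ provides first-order slack; the residual Eguchi--Hanson errors, of size $O(a^4)$, are then absorbed by a compactly supported perturbation for $a$ small. The cone is attached only inside this exactly round region, by replacing $\sin\rho$ with a concave, $1$-Lipschitz warping function (smoothed via Otsu's lemma), where Lemma \ref{Ricwarp1} gives $\Ric\ge 0$ with no error terms whatsoever. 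Note in particular that in the paper the smallness parameter supplying the slack is the core size $a$ (equivalently, how far out one glues), not the cone angle $c$; the threshold $c'$ merely records where on the sphere the cone is attached. If you want to salvage your direct approach, you must either interleave the bending and the unsquashing so that the bending creates the slack before it is needed, with a careful verification of the three differential inequalities of Lemma \ref{vaefdsc}, or adopt the conformal-change route.
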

	\begin{thm}\label{thm1a}
	There exists a constant $c'>0$ and, for every $c\in (0,c')$ and $d\in (0,1/2)$, there exists a   Riemannian metric $ds_{M'}^2=ds_{M',c,d}^2$ on $(0,\pi)\times (S^3/\mu_2)$, such that the following hold.
	\begin{itemize}
		\item The metric extends to a smooth metric on the completion of the space, which is a smooth simply connected $4$-manifold $M'$.
		\item The metric $ds_{M'}^2$ has non-negative Ricci curvature.
		\item $M'$ has  a free involutive isometry, which, for $S^3\subseteq \CC^2$,  reads as \begin{equation*}
			(r,(z_1,z_2))\mapsto (r,(i{z_1},i{z_2})).
		\end{equation*} 
		\item It holds
		\begin{equation*}
			ds_{M'}^2=dr^2+c^2\big(\sin(r)-\sin\big(9d/20\big)+ d\big)^2ds_3^2\qquad\text{for }r\in(d,\pi-d).
		\end{equation*}	
	\end{itemize}
\end{thm}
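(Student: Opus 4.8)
The plan is to construct $M'$ as a warped product over the interval $(0,\pi)$ with cross-section $S^3/\mu_2$, differing from Theorem~\ref{thm1} only in that the infinite cone end is replaced by a bounded ``sine'' profile which is capped off by a \emph{second} Eguchi--Hanson bolt. Writing the metric in the isotropic form $dr^2+\phi(r)^2\,ds_3^2$ with $\phi>0$, and recalling that the round $S^3$ has $\Ric=2\,ds_3^2$, the Ricci tensor is non-negative exactly when $\phi''\le 0$ (radial directions) and $2\big(1-(\phi')^2\big)-\phi\,\phi''\ge0$ (spherical directions). For the middle profile $\phi(r)=c\big(\sin r-\sin(9d/20)+d\big)$ one has $\phi'=c\cos r$ and $\phi''=-c\sin r\le0$ on $(0,\pi)$, and
\begin{equation*}
2\big(1-(\phi')^2\big)-\phi\,\phi''=2\big(1-c^2\cos^2 r\big)+c^2\sin r\big(\sin r-\sin(9d/20)+d\big)\ge 2(1-c^2)>0
\end{equation*}
for every $c<1$, since $\sin r\ge0$ and the last bracket is positive on $(d,\pi-d)$; here $d<1/2$ guarantees $\sin d>\sin(9d/20)$, so that $\phi>0$ there. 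Thus the middle region carries non-negative (indeed positive) Ricci curvature, and $\phi$ is concave.

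The two endpoints are the heart of the matter. If one simply let $\phi$ vanish at $r=0$ and $r=\pi$, the cross-section $S^3/\mu_2$ would collapse isotropically to a point, producing a metric cone singularity over $\RR\mathbb{P}^3$ and, fatally for us, a \emph{fixed point} of the involution $(r,(z_1,z_2))\mapsto(r,(iz_1,iz_2))$. Instead I would resolve each pole by gluing in a rescaled and perturbed Eguchi--Hanson core, exactly as furnished by Theorem~\ref{thm1}: over a small annulus the doubly warped Eguchi--Hanson metric opens the isotropic warped product into its bolt, the $S^2$ onto which the fibres collapse. The shift constant $\sin(9d/20)-d$ and the admissible ranges of $c$ and $d$ are chosen so that at the gluing radii $r=d$ and $r=\pi-d$ the value and first derivative of $\phi$ match those of the (rescaled) asymptotic cone $c(r+\mathrm{const})$ of the core; a short concave interpolation then completes a $C^\infty$ gluing. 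Since passing from the cone slope $c$ to the sine slope $c\cos d<c$ only \emph{decreases} $\phi'$, concavity (hence non-negative radial Ricci) is preserved, while the spherical inequality holds with room to spare because $\phi'=O(c)$ and $\phi\,\phi''=O(c^2)$ through the transition. By the symmetry $\phi(\pi-r)=\phi(r)$ the end at $\pi$ is the mirror image of the one at $0$, so both ends are genuine Eguchi--Hanson bolts.

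It remains to read off the four conclusions. The completion $M'$ is a smooth closed $4$-manifold obtained by joining two Eguchi--Hanson cores (each diffeomorphic to the disk bundle over $S^2$ underlying $T^*S^2$, with boundary $S^3/\mu_2=\RR\mathbb{P}^3$) along the sine tube; since each core is simply connected and they are glued along the connected fibre $\RR\mathbb{P}^3$, van Kampen's theorem gives $\pi_1(M')=1*_{\ZZ/2}1=1$. Non-negative Ricci curvature holds on the middle tube by the computation above, on each core because the perturbed Eguchi--Hanson metric does, and across the two gluing annuli by the choice of interpolation. The map $(r,(z_1,z_2))\mapsto(r,(iz_1,iz_2))$ preserves $r$ and acts by isometries of $ds_3^2$ on each cross-section, hence is an involutive isometry of $M'$; it is free because it restricts to a free involution of every $\RR\mathbb{P}^3$ fibre and of each bolt, exactly as in the Eguchi--Hanson construction recalled in Theorem~\ref{thm1}, and, crucially, because the resolution of the two poles removed the only points where a collapse could create a fixed point. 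Finally the metric equals $dr^2+c^2\big(\sin r-\sin(9d/20)+d\big)^2\,ds_3^2$ on $(d,\pi-d)$ by construction.

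The main obstacle is the two gluings: one must pass from the doubly warped Eguchi--Hanson metric to the isotropic sine warped product in a $C^\infty$ fashion while never violating $\phi''\le0$ and the spherical inequality. This is exactly where the perturbation and gluing technology behind Theorem~\ref{thm1} (in the spirit of \cite{anderson1990}) is needed, and where the precise constant $9d/20$, together with the smallness of $c$ and the bound $d<1/2$, earns its keep. A secondary but conceptually decisive point is that freeness of the involution forces the compactification to be effected by a \emph{second bolt} rather than by capping with a single point; this is why the profile is symmetric about $r=\pi/2$ and why $M'$ has two Eguchi--Hanson ends.
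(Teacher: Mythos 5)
Your proposal is correct and takes essentially the same route as the paper: the paper likewise caps \emph{both} ends of the sine tube with rescaled copies of the Theorem~\ref{thm1} core, verifies non-negative Ricci curvature via the warped-product formulas (Lemma~\ref{Ricwarp1}) using concavity and $1$-Lipschitzness of the profile, and gets simple connectedness from Seifert--Van Kampen, with the free involution extending naturally. The only cosmetic difference is that the paper does not match first derivatives at the gluing radii: it allows a concave corner (the slope drops from $c$ to roughly $c\cos(9d/10)$) and smooths it with \cite[Lemma 1.5]{Otsu91}, which is precisely what your ``short concave interpolation'' amounts to.
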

The rest of this section is devoted to the proof of the two theorems above.
	\subsection*{The Eguchi--Hanson construction}
	For $a\in(0,1)$, we use the  variables
	\begin{equation*}
		r \in (a,\infty),\quad
		\theta\in (0,\pi),\quad	\phi\in (0,2\pi),\quad	\psi\in (0,4\pi).
	\end{equation*}
	We also recall the following definitions
	 \begin{align*}
		\sigma_x  &= 
		 \frac{1}{2} \big(\sin (\psi) d\theta - \sin (\theta) \cos (\psi) d\phi\big),\\
		\sigma_y &= 
		  \frac{1}{2}\big(- \cos (\psi) d\theta-\sin(\theta)\sin(\psi) d\phi\big),\\
		\sigma_z &=  
		 \frac{1}{2} \big(d\psi + \cos (\theta) d\phi\big).
	\end{align*}
	For every $r$, we isometrically embed $(S^3,r^2(\sigma_x^2+\sigma_y^2+\sigma_z^2))$  in  $\mathbb{C}^2$ by
	\begin{equation*}
		z_1=r\cos\big(\tfrac\theta 2\big)e^{\tfrac{i}{2}(\psi+\phi)},\qquad
		z_2=r\sin\big(\tfrac\theta 2\big)e^{\tfrac{i}{2}(\psi-\phi)}.
	\end{equation*}

	Eguchi and  Hanson considered in \cite{EguchiHanson} the gravitational instanton   given by \begin{equation*}
		(a,\infty)\times( S^3/\{\psi\sim \psi+2\pi\}),
	\end{equation*}  with  the  metric
	\begin{equation}\label{verfdsfcas}
		ds^2=\frac{1}{1-({a/r})^4}dr^2+r^2 (\sigma_x^2+\sigma_y^2)+r^2\big(1-({a}/{r})^4\big)\sigma_z^2.
	\end{equation}
	The quotient by  the relation $\psi\sim \psi+2\pi$ corresponds to the quotient by   $\mu_2$  on $\CC^2$.  
	As an effect of the degeneration of the metric as $r\downarrow a$, we have that, at $r=a$, all the points $\{(r,\theta,\phi,\psi)\}_{\psi\in [0,2\pi]}$ have to be identified, thus corresponding to a two sphere $S^2$ with coordinates $(\theta,\phi)$. 
	In complex coordinates, the points on the two sphere correspond to the orbits of the diagonal action of $SU(1)$ on $\CC^2$.
	We can  thus see the Eguchi--Hanson space as the cotangent bundle of this two sphere, where the coordinates $(r,\psi)$ correspond to coordinates on the cotangent fibers $\RR^2$. In particular, the space is simply connected (more precisely, there is an obvious deformation retraction from the space to a two sphere).
	
	We finally remark that setting 
	\begin{equation*}
			u=r\big(1-(a/r)^4\big)^{1/2}\in (0,\infty),
	\end{equation*}
	the metric becomes
	\begin{equation*}
		ds^2=\frac{1}{\big(1+({a}/{r})^4\big)^2}du^2+r^2 (\sigma_x^2+\sigma_y^2)+u^2\sigma_z^2.
	\end{equation*}
	\subsection*{A free $\ZZ/2\ZZ$ isometry}
	We consider the map $\iota$, involution on $S^3/\{\psi\sim \psi+2\pi\}=S^3/\mu_2$  defined as
	\begin{equation*}
		\theta\rightarrow\pi-\theta,\quad
		\phi\rightarrow\pi+\phi,\quad
		\psi\rightarrow \pi-\psi,
	\end{equation*}
	where $\phi$ and $\psi$   have to be understood modulo $2\pi$.  
    Notice that this map is \emph{not} well defined on $S^3$ without the identification $\psi\sim\psi+2\pi$. This because the action $\phi\mapsto\pi+\phi$ forces us to consider the $\phi$ coordinate modulo $2\pi$, so that, in order to recover the other coordinate $\psi$ uniquely, it is necessary to take also the second quotient modulo $2\pi$ and not $4\pi$.
    However, $\iota$ is induced by  well-defined map on $\CC^2$, that we still call $\iota$, which reads as
	\begin{equation*}
		(z_1,z_2)\mapsto (-\bar z_2, \bar z_1).
	\end{equation*}
	Notice that $\iota$ has  order $4$ on $\CC^2$, $\iota^2$  is $-1$ acting diagonally, and $\iota$ is well-defined and of order $2$ on $\CC^2/\mu_2$. 
	On the Eguchi--Hansen space, $\iota$ is an involutive  isometry (this is easier to prove in angular coordinates)   without fixed points. 
	We define $\nu_4$ as the group of isometries of $\RR^4=\CC^2$ generated by $\iota$. 
    
    We use coordinates $(x_1,x_2,x_3,x_4)$ on $\RR^4=\CC^2$. Consider the map 
    \begin{equation}\notag
        \Psi:S^3\rightarrow S^3\qquad(x_1,x_2,x_3,x_4)\mapsto(x_1,x_3,x_2,-x_4).
    \end{equation} 
    Then $\Psi\circ\iota\circ\Psi^{-1}$ reads as the multiplication by $i$, diagonally, i.e., the action of $\mu_4$. Moreover, the action of $\mu_2$ commutes with  $\Psi$.

\subsection*{Modification of  the metric}

	We now modify the metric following \cite[Proof of Proposition 3.1]{anderson1990}.  We set $t=0$ on $u=0$ and
	\begin{equation*}
		t=\int \frac{du}{1+(a/r)^4}.
	\end{equation*}
	 We set $d s_h^2=1/h(t)^2 ds^2$, where $h$ is a suitable convex function such that
	\begin{equation*}
		h(s)=
		\begin{cases}
			1\qquad&\text{for $s\in (0,1/4)$},\\
			1+s^2\qquad&\text{in a neighborhood of $1$}.
		\end{cases}
	\end{equation*}
	The function $h$ is chosen as in \cite{anderson1990}, so that $ds_h^2$ has non-negative Ricci curvature. 
	
	We first remark that, as $a\downarrow 0$, the metric $ds_h^2$ smoothly converges, on a neighborhood of $\{u=1\}$, to the metric
	\begin{equation}\label{fdscsac}
		\frac{1}{(1+u^2)^2}du^2+\frac{u^2}{(1+u^2)^2}d s_3^2,
	\end{equation}
	 which   has Ricci curvature bounded from below by $12$ (this is apparent after recalling \eqref{verfdsfcas} and performing the change of variables described below, which establishes an isometry with an open subset of a sphere). 
	 Then, if $a$ is small enough, we can perturb the metric with a compactly supported deformation near $\{u=1\}$, to obtain a metric $d\tilde s^2$, that equals \eqref{fdscsac} in a neighborhood of $\{u=1\}$, and still has non-negative Ricci curvature.  
	 
	We set now $u=\tan(\rho/2 )$, so that $d\tilde s$,  for some $b'\in (0,1)$, 
	reads as
	\begin{equation*}
		d\tilde s^2=\tfrac14d\rho^2+\tfrac14\sin^2(\rho) ds_3^2\qquad\text{for $\rho\in (\pi/2-b',\pi/2+b')$}.
	\end{equation*}
	Let now $b\in (0,b')$ and consider the function 
	\begin{equation*}
		\hat\varphi(\rho)\defeq 
		\begin{cases}
			 \sin(\rho)\qquad &\rho\in (0,\pi/2-b),\\
			\tfrac12\cos\big(\pi/2-b\big)(\rho-\pi/2+b)+\sin\big(\pi/2-b\big)\qquad&\rho\in (\pi/2-b,\infty).
		\end{cases}
	\end{equation*}
   Using \cite[Lemma 1.5]{Otsu91} we can perturb $\hat\varphi$ in a small neighborhood of $\pi/2-b$ to obtain a smooth concave and $1$-Lipschitz function  $\varphi$. Hence,  we can  set
	\begin{equation*}
		d\hat s^2=
		\begin{cases}
			d\tilde s^2\qquad&\rho\in (0,\pi/2-b'),\\
			\tfrac14 d\rho^2+\tfrac14\varphi(\rho)^2ds_3^2\qquad &\rho\in (\pi/2-b',\infty),
		\end{cases}
	\end{equation*}
	which, by Lemma \ref{Ricwarp1} below, still has  non-negative Ricci curvature.

    The following result is  well-known.
    \begin{lem}\label{Ricwarp1}
		On $(0,\infty)\times S^k$, consider the Riemannian metric
		\begin{equation*}\notag
			g=dr^2+\varphi(r)^2 ds_k^2.
		\end{equation*}
		Then, if $X_0=\nabla r$ and $X_1$ is tangent to $S^k$, it holds
		\begin{align*}
			\Ric(X_0)=& -k\frac{\varphi''}{\varphi}X_0,\\
			\Ric(X_1)=&\bigg(-\frac{\varphi''}{\varphi}+(k-1)\frac{1-(\varphi')^2}{\varphi^2}\bigg)X_1.
		\end{align*}
	\end{lem}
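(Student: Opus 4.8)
The plan is to verify the two identities by the standard moving-frames computation for a warped product: compute all sectional curvatures of $g$ in an adapted orthonormal frame, and then sum them to obtain the Ricci endomorphism. Since the formulas express $\Ric$ as an endomorphism acting on unit vectors, it suffices to compute $\langle \Ric(X_0),X_0\rangle$ and $\langle \Ric(X_1),X_1\rangle$ for $X_0=\nabla r=\partial_r$ and a unit $X_1$ tangent to $S^k$.

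First I would fix a point and choose a local $ds_k^2$-orthonormal coframe $\hat\omega^1,\dots,\hat\omega^k$ on $(S^k,ds_k^2)$, with $d\hat\omega^i=-\hat\omega^i_j\wedge\hat\omega^j$ its Levi-Civita connection forms. Setting $\omega^0=dr$ and $\omega^i=\varphi(r)\hat\omega^i$ produces a $g$-orthonormal coframe, with dual frame $e_0=\partial_r$ and $e_i=\varphi^{-1}\hat e_i$ tangent to the slices $\{r\}\times S^k$. Cartan's first structure equation $d\omega^a=-\omega^a_b\wedge\omega^b$, combined with $d\omega^i=\tfrac{\varphi'}{\varphi}\,\omega^0\wedge\omega^i-\hat\omega^i_j\wedge\omega^j$, identifies the connection forms:
\begin{equation*}
\omega^0_i=-\varphi'\,\hat\omega^i=-\tfrac{\varphi'}{\varphi}\,\omega^i,\qquad \omega^i_j=\hat\omega^i_j.
\end{equation*}

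Next I would feed these into the second structure equation $\Omega^a_b=d\omega^a_b+\omega^a_c\wedge\omega^c_b$. A direct computation gives $\Omega^0_i=-\tfrac{\varphi''}{\varphi}\,\omega^0\wedge\omega^i$, so the mixed planes have sectional curvature $K(e_0,e_i)=-\tfrac{\varphi''}{\varphi}$; and, using that the unit round sphere has constant curvature $1$ so that its curvature forms are $\hat\Omega^i_j=\hat\omega^i\wedge\hat\omega^j=\tfrac{1}{\varphi^2}\omega^i\wedge\omega^j$, one obtains
\begin{equation*}
\Omega^i_j=\hat\Omega^i_j+\omega^i_0\wedge\omega^0_j=\frac{1-(\varphi')^2}{\varphi^2}\,\omega^i\wedge\omega^j,
\end{equation*}
so the tangential planes have sectional curvature $K(e_i,e_j)=\tfrac{1-(\varphi')^2}{\varphi^2}$. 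Summing over the frame then yields $\langle\Ric(e_0),e_0\rangle=\sum_{i=1}^k K(e_0,e_i)=-k\tfrac{\varphi''}{\varphi}$ and $\langle\Ric(e_1),e_1\rangle=K(e_1,e_0)+\sum_{j\ge 2}K(e_1,e_j)=-\tfrac{\varphi''}{\varphi}+(k-1)\tfrac{1-(\varphi')^2}{\varphi^2}$, with the off-diagonal Ricci terms vanishing by the computed curvature forms; by rotational symmetry of the fiber this holds for any unit $X_1$, giving both claimed formulas.

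The computation is entirely routine, so there is no genuine obstacle; the only point requiring care is bookkeeping of sign conventions in the structure and curvature equations and the normalization $K_{S^k}\equiv 1$, which I would sanity-check against the model case $\varphi=\sin r$ (where $-\varphi''/\varphi=1$ recovers the unit sphere). As an alternative that avoids moving frames, one can note that the slices are totally umbilic with second fundamental form $\tfrac{\varphi'}{\varphi}$ times the induced metric, whose intrinsic curvature is $\tfrac{1}{\varphi^2}$; the Gauss equation then gives the tangential curvatures $\tfrac{1}{\varphi^2}-\big(\tfrac{\varphi'}{\varphi}\big)^2$ and the radial (Jacobi) equation gives $-\tfrac{\varphi''}{\varphi}$, reproducing the same result. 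Either route is standard, and the statement coincides with O'Neill's warped-product formulas.
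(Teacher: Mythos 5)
Your proof is correct. Both computations check out: the connection forms $\omega^0_i=-\tfrac{\varphi'}{\varphi}\omega^i$, $\omega^i_j=\hat\omega^i_j$ satisfy the first structure equation, the curvature forms $\Omega^0_i=-\tfrac{\varphi''}{\varphi}\,\omega^0\wedge\omega^i$ and $\Omega^i_j=\tfrac{1-(\varphi')^2}{\varphi^2}\,\omega^i\wedge\omega^j$ follow, and since these are the only nonvanishing components, the Ricci endomorphism is diagonal in the adapted frame and the traces give exactly the two stated formulas (sanity check $\varphi=\sin r$ works). Note that the paper itself offers no proof of this lemma at all --- it is simply labelled ``well-known,'' being the classical O'Neill warped-product/rotationally-symmetric Ricci formula --- so there is no argument to compare against; your moving-frames derivation (or the equally valid alternative you sketch via the Gauss equation for the totally umbilic slices) supplies a complete justification of what the paper takes for granted.
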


\subsection*{Proofs of Theorems \ref{thm1} and  \ref{thm1a}}
	 Theorem \ref{thm1} follows by using the metric $d\hat s^2$ constructed above, with $c=\cos(\pi/2-b)/2$, where $b\in (0,b')$ and $b'$ is fixed as above. To be more precise, the variable $\rho$ and the metric have to be  rescaled to obtain the statement of Theorem \ref{thm1}. This proves the theorem up to the fact that the free involutive isometry is given by $(r,(z_1,z_2))\mapsto (r,(-\bar z_2,\bar z_1))$. To obtain the correct form of the isometry, it is enough to use the map $\Psi$ on $\RR^4=\CC^2$  as described above (i.e., consider the image of the space through $\Psi$).	\qed

\noindent
To prove Theorem \ref{thm1a}, we rescale the metric $ds_{M',c}^2$ and the variable $r$ given by 	Theorem \ref{thm1}, thus obtaining $d\tilde s_{M',c,d}^2$ such that 
	\begin{equation*}
		d\tilde s_{M',c,d}^2=dr^2+(c(r+d/10))^2 ds_3^2\qquad\text{for $r>d/2$}.
	\end{equation*}
	Next, we consider the function 
	\begin{equation*}
		\hat\psi(\rho)\defeq 
		\begin{cases}
		c(r+d/10)\qquad &r\in (d/2,9d/10),\\
		c/2\sin(r)-c/2\sin(9d/10)+ cd\qquad&r\in (9d/10,\pi-9d/10),\\
		c(\pi- r+d/10)\qquad &r\in (\pi-9d/10,\pi-d/2).\\
		\end{cases}
	\end{equation*} 
    By \cite[Lemma 1.5]{Otsu91}, we can perturb $\hat\psi$ in a small neighborhood of $\{9d/10,\pi-9d/10\}$ to obtain a smooth concave  and $1$-Lipschitz function $\psi:(d/2,\pi-d/2)$.
	We set
	\begin{equation*}
	d s_{M',c,d}^2=
	\begin{cases}
		d\tilde s_{M',c,d}^2\qquad&r\in (0,d/2) ,\\
		dr^2+\psi(r)^2ds_3^2\qquad&r\in (d/2,\pi-d/2), \\
		d\tilde s_{M',c,d}^2(\pi-r,\cdot)\qquad&r\in (\pi-d/2,\pi) ,\\
	\end{cases}
\end{equation*}
which has still non-negative Ricci curvature thanks to Lemma \ref{Ricwarp1}. The resulting space, which corresponds to a gluing,  remains simply connected, by the Seifert--Van Kampen Theorem, and the free involutive isometry is naturally extended.	\qed

	\section*{Desingularization of $\CC^2/\mu_4$}
    In Theorem \ref{thm2a}  we describe the spaces $(N_i)$, which are obtained from the spaces described in Theorem \ref{thm2}.
\begin{thm}\label{thm2}
	There exists a constant $c>0$ and, for every $c\in (0,c')$, there exists a   Riemannian metric $ds_N^2=ds_{N,c}^2$ on $(0,\infty)\times (S^3/\mu_4)$, such that the following hold.
	\begin{itemize}
		\item The metric extends to a smooth metric on the completion of the space, which is a smooth simply connected $4$-manifold $N$.
		\item The metric $ds_N^2$ has non-negative Ricci curvature.
		\item  It holds 
		\begin{equation*}
		ds_N^2=	dr^2+c^2(r+1)^2ds_3^2\qquad\text{for }r>1.
		\end{equation*}
	\end{itemize}
\end{thm}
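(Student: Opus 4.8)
The plan is to realize $N$ as a cohomogeneity-one (doubly warped) metric on the total space of the disk bundle of degree $-4$ over $S^2$, degenerating the Hopf circle at the soul, exactly as the Eguchi--Hanson space does for $\mu_2$. Recall that the Hopf fibration $S^3\to S^2$ is invariant under the diagonal $U(1)$-action, hence descends to a circle bundle $S^3/\mu_4\to S^2$, where $\sigma_z$ is dual to the fiber and $\sigma_x,\sigma_y$ are pulled back from the base. I would therefore look for a metric of the form
\begin{equation*}
	ds_N^2=dr^2+a(r)^2\sigma_z^2+b(r)^2(\sigma_x^2+\sigma_y^2)\qquad\text{on }(0,\infty)\times(S^3/\mu_4),
\end{equation*}
for suitable smooth profiles $a,b>0$, collapsing the fiber ($a(0)=0$) while keeping the base $S^2$ of positive size ($b(0)>0$).

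The computation underlying everything is the biaxial generalization of Lemma~\ref{Ricwarp1}: for $X_0=\nabla r$, $X_1$ a unit vector along the Hopf fiber, and $X_2,X_3$ unit vectors tangent to the base, one has
\begin{align*}
	\Ric(X_0)&=\Big(-\frac{a''}{a}-2\frac{b''}{b}\Big)X_0,\\
	\Ric(X_1)&=\Big(-\frac{a''}{a}-2\frac{a'b'}{ab}+\frac{2a^2}{b^4}\Big)X_1,\\
	\Ric(X_2)=\Ric(X_3)&=\Big(-\frac{b''}{b}-\frac{a'b'}{ab}-\frac{(b')^2}{b^2}+\frac{4}{b^2}-\frac{2a^2}{b^4}\Big)X_2.
\end{align*}
These reduce to Lemma~\ref{Ricwarp1} with $k=3$ when $a=b$ (the round fiber), which is a useful consistency check. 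Here the term $+2a^2/b^4$ in $\Ric(X_1)$ is the favorable O'Neill contribution of the collapsing fiber, while the term $-2a^2/b^4$ in $\Ric(X_2)$ is its unfavorable counterpart on the base: keeping $\Ric(X_2)\ge 0$ essentially forces the fiber not to be too large compared to the base, roughly $a^2\le 2b^2$ (modulo the concavity terms), and this is the single binding constraint of the whole construction.

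I would first pin down the two ends. For smoothness of the completion at $r=0$ I impose $a(0)=0$ with $a'(0)=4$ (the order of $\mu_4$, exactly as $a'(0)=2$ produces Eguchi--Hanson for $\mu_2$), together with $b(0)>0$, $b'(0)=0$, and the usual odd/even parities of $a$ and $b$; this makes a tubular neighborhood of the soul the smooth degree $-4$ disk bundle over $S^2$, i.e.\ the minimal resolution $\mathcal{O}(-4)$ of the cyclic singularity $\CC^2/\mu_4=\tfrac14(1,1)$. Since this total space deformation retracts onto its zero section $S^2$, the manifold $N$ is simply connected. At the other end I impose $a=b=c(r+1)$ for $r>1$, which turns the metric into $dr^2+c^2(r+1)^2ds_3^2$ as required, the round cone over $cS^3/\mu_4$; plugging $a=b=c(r+1)$ into the formulas above gives $\Ric(X_0)=0$ and $\Ric(X_1)=\Ric(X_2)=\tfrac{2}{(r+1)^2}\cdot\tfrac{1-c^2}{c^2}\ge 0$ for $c\le 1$, so the conical region is unconditionally non-negatively curved.

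The hard part, and the reason the constant $c$ must be taken small, is the interpolation on the cap $r\in(0,1)$: I must connect the steep fiber-collapse slope $a'(0)=4$ dictated by $\mu_4$ to the thin cone values $a(1)=b(1)=2c$, while keeping all three Ricci eigenvalues non-negative simultaneously. Concavity of $a$ (a sharp bend near $r=0$) makes $-a''/a$ large and positive, which is favorable for $\Ric(X_0)$ and $\Ric(X_1)$; the delicate point is to let $b$ decrease from $b(0)$ down to the small value $2c$ in such a way that $a^2\le 2b^2$ persists and that the convexity of $b$ is dominated by the concavity of $a$ in $\Ric(X_0)=-a''/a-2b''/b$. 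I would write down explicit piecewise-elementary profiles realizing these constraints, choosing $c$ small enough that the cone cross-section is thin and leaves room in the inequality $4/b^2\ge 2a^2/b^4$ throughout the transition, then round the finitely many corners using a smoothing lemma of the type \cite[Lemma 1.5]{Otsu91} (exactly as in the proofs of Theorems~\ref{thm1} and \ref{thm1a}), checking that the smoothing preserves the three sign conditions. Verifying the curvature inequalities uniformly across the cap is the main obstacle; everything else (smoothness, topology, the conical end) is essentially forced by the boundary data above. \qed
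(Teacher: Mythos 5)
Your setup coincides with the paper's own proof: the paper writes the metric as $ds^2_{\rho,\phi}=dr^2+\rho(r)^2\,d\alpha^2+\phi(r)^2\,\pi_{Hopf}^*ds_2^2$ on $(0,\infty)\times(S^3/\mu_4)$ (your $a,b$ are its $\rho,\phi$), obtains smoothness and simple connectedness of the completion from Proposition~\ref{vefdsca} together with the retraction onto the central $S^2$, and controls the Ricci sign through Lemma~\ref{vaefdsc} (quoted from \cite{zhou2024}): your three eigenvalue expressions are exactly conditions (1)--(3) there, divided by $\rho^2$, resp.\ $\phi^2$, and your computation on the conical region is correct. So this is the same approach, not a different route.

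The gap is that you stop precisely where the proof lies. Exhibiting cap profiles that satisfy (1)--(3) simultaneously \emph{is} the content of the theorem, and your proposal only promises them (``I would write down explicit piecewise-elementary profiles\ldots''), arguing heuristically that small $c$ ``leaves room''. The paper writes them down: $\hat\rho(r)=4r$, $\hat\phi\equiv 4$ for $r<1$, and $\hat\rho=\hat\phi=4+c(r-1)$ for $r>1$, with the two corners co-located at $r=1$, so that the concave corner of $\hat\rho$ (mass $4-c$) dominates twice the convex corner of $\hat\phi$ (mass $2c$); after mollification this yields $-\rho''-2\phi''\ge 0$ as measures, hence condition (1) via $\rho\le\phi$, while (2) and (3) are checked directly for $c$ small; finally the \emph{whole metric} (not merely the variable $r$) is rescaled to reach the stated normalization. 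Moreover, your narrative for the cap is mis-calibrated in a way that shows the deferred step is not routine. Under your normalization (cone imposed exactly at $r=1$), concavity of $a$ and the matching data force $a(r)\le c(r+1)\le 2c$ on the whole cap (tangent line at $r=1$); likewise, after its last convex bend $b$ is concave, hence $b\le c(r+1)\le 2c$ from that bend onward; condition (1) forces that bend to sit where $-a''$ has mass, and a short mass count against the total concave mass $4-c$ of $a$ then gives $b(0)=O(c)$. So the soul $S^2$ is forced to have size comparable to $c$ and $b$ is essentially non-decreasing: your plan of letting $b$ decrease from a definite-size $b(0)$ down to $2c$ cannot be realized, and any convex turnaround of $b$ placed where $a$ is already affine kills $\Ric(X_0)$ outright. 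The paper's device of building everything at unit scale --- soul of size $4$, thin cone of slope $c$ attached at $r=1$ --- and rescaling only at the very end is exactly what removes this delicacy and makes the three verifications a few lines each.
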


\begin{thm}\label{thm2a}
	There exists a constant $c>0$ and, for every $c\in (0,c')$ and $d\in (0,1/2)$, there exists a   Riemannian metric $ds_N^2=ds_{N,c,d}^2$ on $(0,\pi)\times (S^3/\mu_4)$, such that the following hold.
	\begin{itemize}
		\item The metric extends to a smooth metric on the completion of the space, which is a smooth simply connected $4$-manifold $N$.
		\item The metric $ds_N^2$ has non-negative Ricci curvature.
		\item It holds
		\begin{equation*}
			ds_N^2=dr^2+c^2\big(\sin(r)-\sin\big(9d/20\big)+d\big)^2ds_3^2\qquad\text{for }r\in(d,\pi-d).
		\end{equation*}	
	\end{itemize}
\end{thm}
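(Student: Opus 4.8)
The plan is to mimic the proof of Theorem \ref{thm1a} almost verbatim, replacing the input Theorem \ref{thm1} with Theorem \ref{thm2} and the base $S^3/\mu_2$ with $S^3/\mu_4$. Since Theorem \ref{thm2} already delivers a smooth, simply connected, non-negatively Ricci curved resolution that is asymptotic to the same linear cone $dr^2+c^2(r+1)^2ds_3^2$, the gluing that produced the bounded-diameter space $M'$ transfers directly. First I would take the metric $ds_{N,c}^2$ and the radial variable $r$ from Theorem \ref{thm2} and rescale both, so as to obtain a metric $d\tilde s_{N,c,d}^2$ with
\begin{equation*}
    d\tilde s_{N,c,d}^2=dr^2+\big(c(r+d/10)\big)^2ds_3^2\qquad\text{for }r>d/2.
\end{equation*}
The idea is then to close the space up at a second point by interpolating the linear cone profile with a (rescaled) sine profile, so that the completed manifold carries a spherical-suspension-type region and, after rescaling, converges to the spherical suspension of $cS^3/\mu_4$.

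Concretely, I would define the three-piece warping function $\hat\psi$ exactly as in the proof of Theorem \ref{thm1a}, namely linear with slope $c$ near $r=d/2$, a scaled sine on the central interval, and the reflected linear piece near $r=\pi-d/2$, and then invoke \cite[Lemma 1.5]{Otsu91} to round the two corners at $\{9d/10,\pi-9d/10\}$, producing a smooth, concave, $1$-Lipschitz function $\psi$. Setting
\begin{equation*}
    ds_{N,c,d}^2=
    \begin{cases}
        d\tilde s_{N,c,d}^2 & r\in(0,d/2),\\
        dr^2+\psi(r)^2ds_3^2 & r\in(d/2,\pi-d/2),\\
        d\tilde s_{N,c,d}^2(\pi-r,\cdot) & r\in(\pi-d/2,\pi),
    \end{cases}
\end{equation*}
gives a metric that matches smoothly across the two gluing loci, because $\psi$ coincides with the linear profile near $r=d/2$ (and symmetrically near $r=\pi-d/2$), so those pieces glue to $d\tilde s_{N,c,d}^2$ with no loss of smoothness; at the two tips the completion is smooth by Theorem \ref{thm2}. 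The non-negativity of the Ricci curvature then follows from Lemma \ref{Ricwarp1} with $k=3$, since concavity forces $\psi''\le 0$ and hence $\Ric(X_0)=-3\psi''/\psi\,X_0\ge 0$, while the $1$-Lipschitz bound gives $1-(\psi')^2\ge 0$ and hence $\Ric(X_1)\ge 0$.

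The only step that is not a literal transcription is the verification that Lemma \ref{Ricwarp1}, stated for the round $S^k$, applies with the base $S^3/\mu_4$ in place of $S^3$, and this is where I expect to be most careful, although the obstacle is mild. The point is that $S^3/\mu_4$ is the lens space $L(4,1)$, on which $\mu_4$ acts freely by isometries of the round metric, so the warped-product Ricci tensor is computed by the same pointwise formula as over $S^3$. For the topology, I would conclude exactly as in Theorem \ref{thm1a}: the completed space $N$ is the union of two open sets, each diffeomorphic to a truncated copy of the resolution from Theorem \ref{thm2} and hence simply connected, with connected overlap diffeomorphic to a product $(S^3/\mu_4)\times I$, so that by the Seifert--Van Kampen theorem $\pi_1(N)=1*_{\ZZ/4\ZZ}1=1$. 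Unlike in Theorem \ref{thm1a}, no free involutive isometry needs to be tracked, since $N$ is not subjected to any further quotient, so that part of the argument is simply omitted.
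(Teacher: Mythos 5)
Your proposal is correct and follows essentially the same route as the paper, whose proof of Theorem \ref{thm2a} is literally the one-line remark that it proceeds ``exactly as the proof of Theorem \ref{thm1a}'': you rescale the metric of Theorem \ref{thm2}, interpolate with the sine profile, smooth the corners via \cite[Lemma 1.5]{Otsu91}, and apply Lemma \ref{Ricwarp1} and Seifert--Van Kampen, just as the paper intends. Your additional observations --- that Lemma \ref{Ricwarp1} applies verbatim over $S^3/\mu_4$ because the quotient map is a local isometry, and that the amalgamated product $1*_{\ZZ/4\ZZ}1$ is trivial --- are correct fillings-in of details the paper leaves implicit.
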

The rest of this section is devoted to the proof of the two theorems above.

\subsection*{The Berger sphere}
Now we use the  variables  
\begin{equation*}
	r\in (0,\infty),\quad\xi\in (0,\pi/2),\quad\alpha\in (0,2\pi),\quad\beta\in (0,2\pi).
\end{equation*}
Notice that for every $r$,    $S^3$ is  embedded in $\CC^2$ by
\begin{equation*}
	z_1=r\sin(\xi)e^{i\alpha}\qquad z_2=r\cos(\xi)e^{i(\alpha+\beta)}.
\end{equation*}
The Hopf fibration  $\pi_{Hopf}$ is defined by
\begin{equation*}
	(\xi,\alpha,\beta)\mapsto \bigg(\frac{\sin(2\xi)}{2}e^{-i\beta}, \frac{\cos(2\xi)}{2}\bigg)\in\CC^2.
\end{equation*}
In complex coordinates, this reads as 
\begin{equation*}
	(z_1,z_2)\mapsto \Big(z_1\bar z_2,-\tfrac{1}{2}\big(|z_1|^2-|z_2|^2\big)\Big).
\end{equation*}
In the coordinates $(\xi,\beta)$ induced by the Hopf fibration,  the standard  metric on $S^2$ reads  as  
\begin{equation*}
	ds_2^2=d\xi^2+\bigg(\frac{\sin(2\xi)}{2}\bigg)^2d\beta^2.
\end{equation*}
For $\rho$ and $\varphi$ to be fixed later, we write the metric on $(0,\infty)\times S^3$ as
\begin{equation*}
	ds^2_{\rho,\phi}=dr^2+\rho(r)^2d\alpha^2+\phi(r)^2\pi_{Hopf}^* ds_2^2;
\end{equation*}
notice that $d\alpha^2+\pi_{Hopf}^* ds_2^2=ds_3^2$, in the sense that this choice  makes the embedding $S^3\subseteq\CC^2$ isometric.

From \cite[Excercise 1.6.23]{Petersen16} (see also \cite[Section 1.4.5]{Petersen16}  and the discussion in \cite{zhou2024}), we have what follows. We will need only the case $n=4$.
\begin{prop}\label{vefdsca}
	Assume that   $n\in\NN$ and that
	\begin{itemize}
		\item in a in a neighborhood of $0$, $\rho(r)=nr$, and
		\item in a in a neighborhood of $0$, $\phi(r)$ equals to a positive constant. 
			\end{itemize}
			Then, the metric $ds_{\rho,\phi}^2$ is a (smooth) Riemannian metric on (the completion of)
			$(0,\infty)\times (S^3/\mu_n)$.
\end{prop}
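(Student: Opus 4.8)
The plan is to reduce the statement to a purely local smoothness assertion at the collapsed set $\{r=0\}$. On $(0,\infty)\times(S^3/\mu_n)$ the functions $\rho,\phi$ are smooth and strictly positive, so there $ds_{\rho,\phi}^2$ is manifestly a smooth Riemannian metric; nothing needs to be checked. As $r\downarrow 0$, the hypothesis $\rho(r)=nr$ forces the Hopf fibre — the $\alpha$-circle, which on $S^3/\mu_n$ has period $2\pi/n$, since $\mu_n$ acts by $\alpha\mapsto\alpha+2\pi/n$ — to shrink to a point, while $\phi(r)\equiv\phi_0>0$ keeps the base $(S^2,\phi_0^2\,ds_2^2)$ of fixed size. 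Hence the completion adjoins a single copy of $S^2$, the zero section, and topologically it is the disc bundle over $S^2=\CC\mathbb{P}^1$ associated with $\pi_{Hopf}\colon S^3/\mu_n\to S^2$, i.e.\ the total space of $\mathcal O(-n)$ (for $n=2$ this recovers $T^*S^2$, consistent with the Eguchi--Hanson picture of the previous section). I would first record this, so that the claim becomes: the metric extends smoothly and positive-definitely across the zero section.

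The heart of the argument is a passage to fibrewise Cartesian coordinates. Over a chart $U\subseteq S^2$ trivialising the Hopf bundle, and writing $\tilde\alpha=n\alpha$ — which has period exactly $2\pi$ precisely because of the $\mu_n$-quotient — I would set
\begin{equation*}
 x_1=r\cos(\tilde\alpha),\qquad x_2=r\sin(\tilde\alpha).
\end{equation*}
These extend smoothly across $\{r=0\}$ and, together with coordinates on $U$, furnish a smooth chart for the completion. The hypothesis $\rho(r)=nr$ is exactly what renders the radial–fibre block flat, since $dx_1^2+dx_2^2=dr^2+n^2r^2\,d\alpha^2=dr^2+\rho(r)^2\,d\alpha^2$; and because $\phi\equiv\phi_0$ near $0$, the base block $\phi^2\,\pi_{Hopf}^*ds_2^2=\phi_0^2\,\pi_{Hopf}^*ds_2^2$ is the pullback of a fixed smooth metric on $U$ and is visibly smooth. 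The matching between the fibre period $2\pi/n$ and the slope $n$ of $\rho$ is the one quantitative condition making the polar-to-Cartesian substitution single-valued and smooth; it is exactly why a mismatched profile would instead produce a conical singularity.

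The step I expect to be the main obstacle is that the Hopf bundle is \emph{non-trivial}, so in the metric the symbol $d\alpha$ must be read as the globally defined connection one-form $\eta=d\alpha+A$ of $\pi_{Hopf}$ (with $A$ a smooth local potential on $U$), for which $\eta^2+\pi_{Hopf}^*ds_2^2=ds_3^2$ as recorded above. Thus the collapsing block is $\rho^2\eta^2=\rho^2\,d\alpha^2+2\rho^2\,d\alpha\,A+\rho^2A^2$, and one must verify that the cross and lower-order terms also extend smoothly across $\{r=0\}$. I would do this via the two identities
\begin{equation*}
 x_1\,dx_2-x_2\,dx_1=n\,r^2\,d\alpha,\qquad x_1^2+x_2^2=r^2,
\end{equation*}
from which, using $\rho=nr$ once more, $2\rho^2\,d\alpha\,A=2n\,(x_1\,dx_2-x_2\,dx_1)\,A$ and $\rho^2A^2=n^2(x_1^2+x_2^2)\,A^2$ are smooth symmetric $2$-tensors in $(x_1,x_2)$ and the base coordinates.

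Finally I would conclude by evaluating the extended tensor at the zero section. Every cross term carries a factor $r$ or $x_i$ and therefore vanishes on $\{r=0\}$, leaving $dx_1^2+dx_2^2$ in the fibre directions and $\phi_0^2\,ds_2^2$ on $S^2$; the resulting block-diagonal form is positive-definite, and the transition maps between trivialisations act by smooth rotations of $(x_1,x_2)$, preserving this smooth structure. This is precisely the content of the computation cited from \cite{Petersen16}, of which the present hypotheses $\rho(r)=nr$ and $\phi\equiv\mathrm{const}$ near $0$ are the strongest, hence trivially sufficient, form of the standard odd/even smoothness conditions for a collapsing warped circle fibre.
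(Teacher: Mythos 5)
Your proof is correct, and it is worth noting that the paper itself does not prove this proposition at all: it simply cites \cite[Exercise 1.6.23]{Petersen16} (together with \cite[Section 1.4.5]{Petersen16} and \cite{zhou2024}), the hypotheses $\rho(r)=nr$ and $\phi\equiv\mathrm{const}$ near $0$ being the strongest form of the odd/even smoothness conditions appearing there --- exactly as you observe in your closing remark. What you have written is the explicit verification that this citation encapsulates: the passage to fibrewise Cartesian coordinates via $\tilde\alpha=n\alpha$, which is where the matching between the slope $n$ of $\rho$ and the fiber period $2\pi/n$ on $S^3/\mu_n$ enters, and the check that the cross terms coming from the connection form extend smoothly. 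Your handling of the non-triviality of the Hopf bundle is the key point done right: the paper's relation $d\alpha^2+\pi_{Hopf}^*ds_2^2=ds_3^2$ only holds if ``$d\alpha$'' is read as the global connection one-form $\eta$ (locally $d\alpha+A$), and your identities $x_1\,dx_2-x_2\,dx_1=nr^2\,d\alpha$ and $x_1^2+x_2^2=r^2$ show that the resulting terms $2\rho^2\,d\alpha\, A$ and $\rho^2A^2$ are smooth tensors across the zero section, with the $U(1)$ transition maps acting by rotations in $(x_1,x_2)$. The trade-off is the expected one: the paper's citation is economical and defers to a standard textbook computation, while your argument is self-contained, makes visible exactly why a mismatched slope would produce a conical singularity, and confirms the topological identification (total space of $\mathcal{O}(-n)$, recovering $T^*S^2$ for $n=2$) that the paper only asserts in the surrounding discussion.
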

The space described in Proposition \ref{vefdsca}  is a vector bundle on $S^2$, in particular, is simply connected (more precisely, there is an obvious deformation retraction from the space to the two sphere given by the Hopf fibration at $r=0$). Moreover, the action of $\mu_n$ on $S^3$ corresponds to the natural $\ZZ/n\ZZ$ action on the Hopf fiber given by the coordinate $\alpha$.
\subsection*{The choice of the metric}
For the following proposition, we refer to  \cite[Section 4.1]{zhou2024}.
\begin{lem}\label{vaefdsc}
	The metric $ds_{\rho,\phi}^2$  has non-negative Ricci curvature if and only, for any $r\in (0,\infty)$, all of the following quantities are non-negative:
	\begin{enumerate}[label=(\arabic*)]
		\item 	$-\frac{\rho''}{\rho}-2\frac{\phi''}{\phi}$,
		\item $2\frac{\rho^4}{\phi^4}-2\frac{\rho}{\phi}\rho'\phi'-\rho\rho''$,
		\item $4-2\frac{\rho^2}{\phi^2}-\frac{\phi}{\rho}\rho'\phi'-\phi\phi''-\phi'\phi'$.
	\end{enumerate}
\end{lem}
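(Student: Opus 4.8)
The plan is to compute the full Ricci tensor of $ds^2_{\rho,\phi}$ in an orthonormal coframe adapted to the Hopf fibration and to check that $\Ric\ge 0$ is equivalent to the simultaneous non-negativity of its three eigenvalues, which turn out to be exactly $(1)$, $(2)$, $(3)$ up to multiplication by the positive factors $1$, $\rho^2$ and $\phi^2$. First I would fix the coframe. Writing $\eta$ for the Hopf connection form (so that $ds_3^2=\eta^2+\pi_{Hopf}^*ds_2^2$) and $\omega^2,\omega^3$ for an oriented orthonormal coframe of the base $(S^2,ds_2^2)$, set
\begin{equation*}
e^0=dr,\qquad e^1=\rho\,\eta,\qquad e^2=\phi\,\omega^2,\qquad e^3=\phi\,\omega^3.
\end{equation*}
The two pieces of geometric input are the structure equation $d\eta=-2\,\omega^2\wedge\omega^3$ (the curvature of the Hopf fibration $S^3\to S^2(1/2)$, whence the factor $2$) together with $d\omega^2=-\omega^2_3\wedge\omega^3$, $d\omega^3=\omega^2_3\wedge\omega^2$ and $d\omega^2_3=-4\,\omega^2\wedge\omega^3$ (the base $S^2$ has Gauss curvature $4$). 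These give
\begin{align*}
de^1&=\tfrac{\rho'}{\rho}\,e^0\wedge e^1-\tfrac{2\rho}{\phi^2}\,e^2\wedge e^3,\\
de^2&=\tfrac{\phi'}{\phi}\,e^0\wedge e^2-\omega^2_3\wedge e^3,\\
de^3&=\tfrac{\phi'}{\phi}\,e^0\wedge e^3+\omega^2_3\wedge e^2.
\end{align*}

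Second, I would solve the first structure equations $de^i=-\theta^i_j\wedge e^j$, $\theta^i_j=-\theta^j_i$, for the Levi-Civita connection forms. The warping produces the diagonal part $\theta^0_1=-\tfrac{\rho'}{\rho}e^1$, $\theta^0_2=-\tfrac{\phi'}{\phi}e^2$, $\theta^0_3=-\tfrac{\phi'}{\phi}e^3$, whereas the antisymmetric term $-\tfrac{2\rho}{\phi^2}e^2\wedge e^3$ coming from the Hopf curvature must be distributed symmetrically among $\theta^1_2$, $\theta^1_3$, $\theta^2_3$, each acquiring a coefficient $\pm\tfrac{\rho}{\phi^2}$. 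This redistribution is the O'Neill $A$-tensor of the submersion and is the \emph{sole} source of the curvature terms $\tfrac{\rho^4}{\phi^4}$ and $\tfrac{\rho^2}{\phi^2}$; getting its sign and magnitude right is the delicate point of the whole computation.

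Third, I would form the curvature forms $\Omega^i_j=d\theta^i_j+\theta^i_k\wedge\theta^k_j$, read off the sectional curvatures $R_{ijij}$, and sum. A direct check shows the off-diagonal Ricci components (e.g.\ between $e_1$ and $e_2$) cancel, so that in the frame $(e_0,e_1,e_2,e_3)$ dual to the above the Ricci tensor is diagonal, with
\begin{align*}
\Ric_{00}&=-\tfrac{\rho''}{\rho}-2\tfrac{\phi''}{\phi},\\
\Ric_{11}&=2\tfrac{\rho^2}{\phi^4}-2\tfrac{\rho'\phi'}{\rho\phi}-\tfrac{\rho''}{\rho},\\
\Ric_{22}&=\Ric_{33}=\tfrac{4}{\phi^2}-2\tfrac{\rho^2}{\phi^4}-\tfrac{\rho'\phi'}{\rho\phi}-\tfrac{\phi''}{\phi}-\tfrac{(\phi')^2}{\phi^2}.
\end{align*}
Multiplying $\Ric_{11}$ by $\rho^2>0$ and $\Ric_{22}$ by $\phi^2>0$ turns these three distinct eigenvalues into exactly the quantities $(2)$, $(3)$, while $\Ric_{00}$ is $(1)$. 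Since the Ricci tensor is diagonalized with these eigenvalues, $\Ric\ge0$ holds if and only if all three are non-negative, which is the claim.

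The main obstacle I anticipate is entirely in the second step: carrying the factor $2$ from the Hopf curvature and the Gauss curvature $4$ of the base correctly through the off-diagonal connection forms, and verifying that the twisting contributions enter the fiber eigenvalue $\Ric_{11}$ with a plus sign ($+2\rho^2/\phi^4$) and the base eigenvalue $\Ric_{22}$ with a minus sign ($-2\rho^2/\phi^4$), as dictated by O'Neill's formulas. Once the connection forms are pinned down, the passage to the curvature forms and the Ricci components is routine bookkeeping.
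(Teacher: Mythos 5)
Your approach is sound and your final formulas are correct: in the coframe $e^0=dr$, $e^1=\rho\,\eta$, $e^2=\phi\,\omega^2$, $e^3=\phi\,\omega^3$ the Ricci tensor is indeed diagonal, with entries
\begin{align*}
\Ric_{00}&=-\tfrac{\rho''}{\rho}-2\tfrac{\phi''}{\phi},\qquad
\Ric_{11}=2\tfrac{\rho^2}{\phi^4}-2\tfrac{\rho'\phi'}{\rho\phi}-\tfrac{\rho''}{\rho},\qquad
\Ric_{22}=\Ric_{33}=\tfrac{4}{\phi^2}-2\tfrac{\rho^2}{\phi^4}-\tfrac{\rho'\phi'}{\rho\phi}-\tfrac{\phi''}{\phi}-\tfrac{(\phi')^2}{\phi^2},
\end{align*}
and multiplying by the positive factors $1$, $\rho^2$, $\phi^2$ produces exactly the quantities $(1)$, $(2)$, $(3)$, so the equivalence with $\Ric\ge 0$ follows from the diagonalization. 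Note, however, that the paper does not prove this lemma at all: it defers the computation to the reference [Zhou, Section 4.1, arXiv:2406.02279]. So your Cartan-formalism derivation is not a variant of an argument in the paper but a self-contained substitute for the citation; what it buys is that the reader need not consult the external reference, and it makes transparent where each term in $(1)$--$(3)$ comes from (warping, O'Neill $A$-tensor, base curvature).

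One caveat you should fix in the write-up: your stated structure equations are not mutually sign-consistent. With your conventions $de^i=-\theta^i_j\wedge e^j$ and $\Omega^i_j=d\theta^i_j+\theta^i_k\wedge\theta^k_j$, the equations $d\omega^2=-\omega^2_3\wedge\omega^3$, $d\omega^3=\omega^2_3\wedge\omega^2$ on a base of Gauss curvature $4$ force $d\omega^2_3=+4\,\omega^2\wedge\omega^3$, not $-4$; check this on the round sphere of radius $1/2$ with $\omega^2=\tfrac12 d\theta$, $\omega^3=\tfrac12\sin\theta\,d\phi$, $\omega^2_3=-\cos\theta\,d\phi$, where $d\omega^2_3=\sin\theta\,d\theta\wedge d\phi=4\,\omega^2\wedge\omega^3$. (By contrast, the sign in $d\eta=\pm2\,\omega^2\wedge\omega^3$ is a genuine orientation choice and either works.) If one carries your stated signs literally through the curvature forms, the base term enters $K(e_2,e_3)$ as $-4/\phi^2$ rather than $+4/\phi^2$, and quantity $(3)$ comes out wrong. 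Since your final eigenvalues are the correct ones, you evidently used the consistent relative sign in the actual computation; the quickest way to pin it down, and a check worth recording, is the flat cone $\rho=\phi=r$, for which all three eigenvalues must vanish identically, as they do for the formulas above.
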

Now we choose $\rho$ and $\varphi$. We start by choosing, for $c$ small enough (to be determined later)
\begin{align}
	&\hat\rho =
	\begin{cases}\notag
		r\mapsto nr \qquad &r\in (-1,1),\\
		r\mapsto n+c (r-1)\qquad&r\in (1,\infty);
	\end{cases}\\
	&\hat\phi=
	\begin{cases}\notag
		n\qquad &r\in (-1,1),\\
		r\mapsto n+c (r-1)\qquad&r\in (1,\infty).
	\end{cases}
\end{align}
We then take a radial mollification kernel $\varphi$ supported on $(-1/4,1/4)$ and set, on $[0,\infty)$, $\rho\defeq \varphi\ast\hat\rho$ and similarly $\phi\defeq \varphi\ast\hat\phi$.   
\begin{lem}
	With the choice of $\rho$ and $\phi$ as above, the conditions of Lemma \ref{vaefdsc} are satisfied, provided that $c$ is small enough.
\end{lem}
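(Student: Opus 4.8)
The plan is to exploit that $\varphi$ is supported in $(-1/4,1/4)$ while the only kinks of $\hat\rho$ and $\hat\phi$ sit at $r=1$, so that $\rho=\varphi\ast\hat\rho$ and $\phi=\varphi\ast\hat\phi$ coincide with $\hat\rho$, $\hat\phi$ outside the window $(3/4,5/4)$, where the verification becomes an explicit computation. The key reduction is to write $\hat\rho=\hat\rho_0+c\,(r-1)_+$ and $\hat\phi=n+c\,(r-1)_+$, where $\hat\rho_0(r)=nr$ for $r<1$ and $\hat\rho_0(r)=n$ for $r>1$; then $\rho=\rho_0+cF$ and $\phi=n+cF$ with $\rho_0=\varphi\ast\hat\rho_0$ and $F=\varphi\ast(r-1)_+$ fixed smooth functions. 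In particular $\rho$, $\phi$ and all of their derivatives depend \emph{affinely}, hence continuously, on $c$, and one computes $\phi''=c\,\varphi(\cdot-1)\ge 0$ and $\rho''=(c-n)\varphi(\cdot-1)\le 0$, both supported in $(3/4,5/4)$.

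First I would treat the two regions where the metric is unmollified; there $\varphi$ reproduces affine functions, so the computation is elementary. On $(0,3/4]$ one has $\rho=nr$, $\phi=n$, whence $\rho''=\phi'=\phi''=0$ and conditions (1), (2), (3) of Lemma~\ref{vaefdsc} reduce to $0$, $2r^4$, and $4-2r^2$, all nonnegative since there $r<\sqrt2$. On $[5/4,\infty)$ one has $\rho=\phi=n+c(r-1)$, so $\rho'=\phi'=c$ and $\rho''=\phi''=0$, and the three conditions become $0$, $2-2c^2$, and $2-2c^2$, all nonnegative as soon as $c\le 1$. Thus these two regions impose no real constraint.

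The heart of the matter is the transition window $(3/4,5/4)$, and here I would argue by smallness of $c$. The point is that the direction warped by $\phi$ stays essentially frozen, $\phi'=cF'=O(c)$ and $\phi''=c\,\varphi(\cdot-1)=O(c)$, while $\rho$, $\phi$ are of order $n$ and bounded away from $0$ (one checks $n/2\le\rho_0\le n$ and $\phi\ge n$ on the window). Setting $c=0$, conditions (2) and (3) become $2\rho_0^4/n^4+n\rho_0\,\varphi(\cdot-1)$ and $4-2\rho_0^2/n^2$, which are bounded below by the positive constants $2(1/2)^4$ and $2$ on the compact interval $[3/4,5/4]$; by continuity in $c$ they stay positive for $c$ small. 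For condition (1) I would instead factor out $\varphi(r-1)\ge0$, rewriting it as $\varphi(r-1)\big[(n-c)/\rho-2c/\phi\big]$, whose bracket tends to $1$ as $c\to 0$ and is therefore positive for $c$ small.

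The main obstacle is exactly this transition window, where the only genuinely dangerous contributions are the negative cross terms $-2(\rho/\phi)\rho'\phi'$ in (2) and $-(\phi/\rho)\rho'\phi'-\phi\phi''$ in (3), which carry no favorable sign. What rescues the estimate is the structural observation that each such term contains a factor $\phi'$ or $\phi''$, both $O(c)$, so that every potentially negative term vanishes as $c\to0$ while the surviving positive terms ($2\rho^4/\phi^4$, the constant $4$, and $-\rho''/\rho$) are of order $1$. Hence a single smallness threshold on $c$ closes all three inequalities simultaneously on $(3/4,5/4)$, and together with the two unmollified regions this yields the claim.
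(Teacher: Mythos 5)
Your proof is correct and follows essentially the same route as the paper's: outside the mollification window the conditions are verified explicitly, and on $(3/4,5/4)$ both arguments rest on the same structural point — every potentially negative term carries a factor $\phi'$ or $\phi''$ of order $O(c)$ (equivalently, for condition (1), $-\hat\rho''-2\hat\phi''\ge 0$ so that $\rho''\le 0\le\phi''$ after convolution), while the positive terms $2\rho^4/\phi^4$ and $4-2\rho^2/\phi^2$ stay bounded below by order-one constants. The only cosmetic difference is that the paper writes explicit constants (e.g.\ $\tfrac{(n/2)^3}{(n+c/2)^3}-nc$) where you invoke affine dependence on $c$ and compactness of the window, which is an equally valid way to close the estimate.
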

\begin{proof}
	Notice first that $\hat \rho=\rho$ on $(0,3/4)\cup (5/4,\infty)$ and similarly for $\phi$.
	
	We start by addressing condition $(1)$. Notice first that $\hat \rho''=(c-n)\delta_1$ and $\hat\phi''=c\delta_1$, so that, if $c$ is small enough, $-\hat\rho''-2\hat\phi''\ge 0$. Taking the convolution, we have that $-\rho''-2\phi''\ge 0$, whence $(1)$ follows as $\rho\le \phi$.
	
	For what concerns $(2)$, notice that $\rho$ is concave and therefore
	\begin{equation*}
2\frac{\rho^4}{\phi^4}-2\frac{\rho}{\phi}\rho'\phi'-\rho\rho''\ge 
\begin{cases}
	2\frac{\rho^4}{\phi^4}&\qquad r\in (0,3/4),\\
	2\frac{\rho}{\phi}\Big(\frac{(n/2)^3}{(n+c/2)^3}-nc\Big)&\qquad r\in (3/4,5/4),\\
	2-2c^2&\qquad r\in (5/4,\infty),
\end{cases}
	\end{equation*}
	which is non-negative for $c$ sufficiently small. Finally, for what concerns $(3)$,
	\begin{align*}
			&4-2\frac{\rho^2}{\phi^2}-\frac{\phi}{\rho}\rho'\phi'-\phi\phi''-\phi'\phi'\ge \\&\qquad\qquad
\begin{cases}
	4-2&\qquad r\in (0,3/4),\\
	4-2-\frac{n+c/2}{n/2}nc-(n+c/2)c\|\varphi\|_\infty-c^2&\qquad r\in (3/4,5/4),\\
	4-2-2c^2&\qquad r\in (5/4,\infty),
\end{cases}
	\end{align*}
	which, again,  is non-negative for $c$ sufficiently small.
\end{proof}

\subsection*{Proofs of Theorems \ref{thm2} and  \ref{thm2a}}
	To prove   Theorem \ref{thm2}, we choose  $\rho$ and $\phi$ as described above and we rescale the variable $r$.\qed
	
\noindent
The proof of Theorem \ref{thm2a} is exactly as the proof of Theorem  \ref{thm1a}.\qed

\section{Proof of the  main result}
Fix $c$  smaller than the thresholds given by Theorem \ref{thm1a} and Theorem \ref{thm2a}. Let also $d=d_i=1/i$, for $i\ge 2$. Let $M_i$ be the quotient of the space given by Theorem \ref{thm1a} by the involutive isometry, and let $N_i$ be the space given by Theorem \ref{thm2a}. The conclusion follows, as both $M_i$ and $N_i$ converge, in the GH topology, to the spherical suspension of $cS^3/\mu_4$. 
\qed

\end{document}